\documentclass[a4paper,11pt]{article}
\usepackage{amsfonts}
\usepackage{amsmath}
\usepackage{amsthm}
\usepackage{amssymb}
\usepackage{algorithm}
\usepackage[noend]{algpseudocode}
\usepackage[hyphens]{url}
\usepackage{hyperref}
\usepackage{graphicx}
\usepackage{subfigure}
\usepackage{xspace}
\usepackage{units}

\newtheorem{theorem}{Theorem}

\newtheorem{proposition}[theorem]{Proposition}
\newtheorem{lemma}[theorem]{Lemma}

\begin{document}
\title{Simulating a die roll by flipping two coins}
\author{Giovanni Viglietta\thanks{School of Electrical Engineering and Computer Science, University of Ottawa, Ottawa ON, Canada, \protect\url{viglietta@gmail.com}.}}

\maketitle

\begin{abstract}
We show how to simulate a roll of a fair $n$-sided die by one flip of a biased coin with probability $1/n$ of coming up heads, followed by $3\lfloor\log_2 n \rfloor+1$ flips of a fair coin.
\end{abstract}

Let a \emph{$p$-coin} be a biased coin with probability $p$ of coming up heads. A \emph{fair coin} is a $(1/2)$-coin. A \emph{fair $n$-sided die} is a die with $n$ sides, each of which comes up with probability $1/n$ when the die is rolled.

\begin{proposition}\label{p:1}
For every positive integer $n$, and $k=\lfloor\log_2 n \rfloor$, there are two integers $a$ and $b$, with $0\leqslant a,b\leqslant 2^{k+1}$, such that
$a+b(n-1)=2^{2k+1}.$
\end{proposition}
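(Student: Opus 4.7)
The plan is to locate $b$ by a pigeonhole/interval argument applied to the arithmetic progression $0,\,n-1,\,2(n-1),\,\ldots,\,2^{k+1}(n-1)$, and then set $a = 2^{2k+1} - b(n-1)$. The starting point is that $k=\lfloor \log_2 n\rfloor$ gives $2^k \leqslant n \leqslant 2^{k+1}-1$, hence $n-1 \leqslant 2^{k+1}-1 < 2^{k+1}$. So consecutive terms of the progression above differ by strictly less than $2^{k+1}$, which means that every closed interval of length $2^{k+1}$ that lies inside $[0,\,2^{k+1}(n-1)]$ must contain at least one such term.

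First I would aim the interval at the target $2^{2k+1}$: the natural choice is $[\,2^{2k+1}-2^{k+1},\,2^{2k+1}\,]$, which has length $2^{k+1}$. This interval lies inside $[0,\,2^{k+1}(n-1)]$ precisely when $2^{k+1}(n-1)\geqslant 2^{2k+1}$, i.e.\ when $n-1\geqslant 2^k$. In that case the argument above produces an integer $b$ with $0\leqslant b\leqslant 2^{k+1}$ and $b(n-1)\in[\,2^{2k+1}-2^{k+1},\,2^{2k+1}\,]$, and then $a := 2^{2k+1}-b(n-1)$ automatically satisfies $0\leqslant a\leqslant 2^{k+1}$, as required.

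The main obstacle, and the only subtle point, is the boundary case $n = 2^k$ (which also covers $n=1$). Here $n-1 = 2^k-1$, so $2^{k+1}(n-1) = 2^{2k+1}-2^{k+1}$ falls exactly one step short of the target $2^{2k+1}$, and the interval argument just barely fails. I would handle this case by direct construction: take $a=b=2^{k+1}$, and verify
\[
a+b(n-1) \;=\; 2^{k+1} + 2^{k+1}(2^k-1) \;=\; 2^{k+1}\cdot 2^k \;=\; 2^{2k+1},
\]
which lies within the stated bounds. Together these two cases cover all positive integers $n$, completing the proof.
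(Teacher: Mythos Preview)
Your proof is correct and follows essentially the same route as the paper's. Both treat the case $n=2^k$ separately with $a=b=2^{k+1}$; in the remaining case your interval/pigeonhole argument locates precisely the Euclidean quotient $b=\lfloor 2^{2k+1}/(n-1)\rfloor$ and remainder $a$, which is exactly what the paper writes down directly (and bounds in the same way, using $n-1\geqslant 2^k$).
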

\begin{proof}
If $n=2^k$, we let $a=b=2^{k+1}$. Otherwise $2^k< n< 2^{k+1}$, and we let $a$ and $b$ be, respectively, the remainder and the quotient of the Euclidean division of $2^{2k+1}$ by $n-1$. Hence $0\leqslant a<n-1<2^{k+1}$, $b\geqslant 0$, and $$b=\frac{2^{2k+1}-a}{n-1}\leqslant \frac{2^{2k+1}}{2^k}=2^{k+1}.$$
\end{proof}

\begin{lemma}\label{l:1}
A flip of a $(2^k/n)$-coin, where $k=\lfloor\log_2 n \rfloor$, can be simulated by one flip of a $(1/n)$-coin and $k+1$ flips of a fair coin.
\end{lemma}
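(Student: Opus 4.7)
The plan is to exhibit a two-stage procedure: first flip the biased coin once, then conditionally simulate a dyadic-rational probability using $k+1$ fair-coin flips. Applying Proposition~\ref{p:1}, I obtain integers $a$ and $b$ in $[0,2^{k+1}]$ with $a+b(n-1)=2^{2k+1}$. Dividing this identity by $2^{k+1}n$ yields
\[
\frac{2^k}{n}\;=\;\frac{1}{n}\cdot\frac{a}{2^{k+1}}\;+\;\frac{n-1}{n}\cdot\frac{b}{2^{k+1}},
\]
which is exactly the law of total probability for the scheme described below.

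The simulation then runs as follows. Flip the $(1/n)$-coin once, and use $k+1$ fair-coin flips to generate a uniformly random integer $X\in\{0,1,\ldots,2^{k+1}-1\}$ by reading the flip outcomes as a binary expansion. If the $(1/n)$-coin came up heads, declare the simulated coin heads iff $X<a$; otherwise, declare it heads iff $X<b$. Conditional on the biased coin being heads this branch outputs heads with probability $a/2^{k+1}$, and conditional on tails it outputs heads with probability $b/2^{k+1}$, so the total probability of heads equals $2^k/n$ by the displayed identity.

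The bounds $0\leqslant a,b\leqslant 2^{k+1}$ from Proposition~\ref{p:1} are precisely what makes $a/2^{k+1}$ and $b/2^{k+1}$ valid probabilities, so the procedure is well-defined; the boundary case $a=2^{k+1}$ or $b=2^{k+1}$ simply corresponds to ``always heads'' in that branch and still consumes all $k+1$ fair flips harmlessly. The real content of the lemma is therefore algebraic rather than probabilistic: once one notices that Proposition~\ref{p:1} rewrites $2^k/n$ as a convex combination whose weights $1/n$ and $(n-1)/n$ match the outcomes of the biased coin, the simulation writes itself. I do not expect a genuine obstacle beyond recognizing that Proposition~\ref{p:1} was tailored to produce exactly this decomposition.
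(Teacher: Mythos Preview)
Your proof is correct and matches the paper's argument essentially step for step: both use Proposition~\ref{p:1} to write $2^k/n$ as the convex combination $(1/n)\cdot a/2^{k+1}+((n-1)/n)\cdot b/2^{k+1}$, then realize this via a uniform $(k+1)$-bit integer thresholded at $a$ or $b$ depending on the outcome of the $(1/n)$-coin. Your extra remarks on why the bounds $0\leqslant a,b\leqslant 2^{k+1}$ are needed and on the boundary case $a=2^{k+1}$ are welcome clarifications but do not change the approach.
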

\begin{proof}
We show an algorithm that, given the outcomes of the coin flips, outputs ``heads'' with probability $2^k/n$ and ``tails'' with probability $1-2^k/n$. Let $a$ and $b$ be as in Proposition~\ref{p:1}, and let $d<2^{k+1}$ be the non-negative integer whose $i$-th binary digit is $1$ if and only if the fair coin comes up heads on the $i$-th flip, with $1\leqslant i\leqslant k+1$. We output ``heads'' if and only if the $(1/n)$-coin comes up heads and $d<a$, or the $(1/n)$-coin comes up tails and $d<b$. This happens with probability $$\frac 1n\cdot\frac a{2^{k+1}}+\frac{n-1}n\cdot\frac b{2^{k+1}}=\frac{2^{2k+1}}{n\cdot 2^{k+1}}=\frac{2^k}n.$$
\end{proof}

\begin{lemma}\label{l:2}
A roll of a fair $n$-sided die can be simulated by one flip of a $(2^k/n)$-coin and $2k$ flips of a fair coin, where $k=\lfloor\log_2 n \rfloor$.
\end{lemma}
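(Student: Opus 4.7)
The plan is to describe an algorithm whose input is the outcomes of the $(2^k/n)$-coin flip and of $2k$ fair coin flips, and whose output is a uniformly distributed element of $\{1,\dots,n\}$. Let $m=n-2^k$, so that $0\leqslant m<2^k$ and $n=2^k+m$. I would interpret the $2k$ fair flips as a pair of independent uniform integers $d_1,d_2\in\{0,1,\dots,2^k-1\}$, one formed from each block of $k$ flips, and then look for a decision rule that assigns each element of $\{\text{heads},\text{tails}\}\times\{0,\dots,2^k-1\}^2$ to some outcome in $\{1,\dots,n\}$ so that every outcome receives total probability exactly $1/n$.

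Letting $s_i$ and $t_i$ be the number of pairs $(d_1,d_2)$ mapped to outcome $i$ in the heads and tails branches respectively, the requirement is $s_i\cdot 2^k+t_i\cdot m=2^{2k}$ for every $i$, together with $\sum_i s_i=\sum_i t_i=2^{2k}$. The key step is to spot a simple two-valued assignment solving this linear system: give the first $m$ outcomes the ``heads-only'' profile $(s_i,t_i)=(2^k,0)$ and the remaining $2^k$ outcomes the ``mixed'' profile $(s_i,t_i)=(2^k-m,\,2^k)$. A direct check shows that each profile satisfies the per-row equation and that the two column sums come out to exactly $2^{2k}$.

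Translating this back into an algorithm: if the biased coin comes up heads and $d_1<m$, output $d_1+1$; if the biased coin comes up heads and $d_1\geqslant m$, output $m+d_2+1$; if the biased coin comes up tails, output $m+d_1+1$. A final calculation of the probability of each outcome $i$ then confirms the value $1/n$. This description also gracefully covers the degenerate case $n=2^k$: then $m=0$, the ``heads-only'' bucket is empty, the biased coin is a sure-heads coin, and the algorithm reduces to using $d_2$ to pick uniformly in $\{1,\dots,2^k\}$.

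The main obstacle is guessing the right profiles $(s_i,t_i)$. The natural temptation is either to invoke Proposition~\ref{p:1} or to recursively simulate a fair $m$-sided die in the tails branch using the remaining fair flips, but neither route produces a bounded-flip solution in general. Once the two-valued assignment is written down, everything else is a routine verification, and Proposition~\ref{p:1} is not needed here.
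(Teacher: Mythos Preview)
Your argument is correct and is essentially the paper's own proof: the paper also sets $m=n-2^k$, forms two independent uniform integers in $\{0,\dots,2^k-1\}$ from the $2k$ fair flips, and uses exactly the same three-branch rule (tails $\to d$; heads and $d\geqslant m\to d'$; heads and $d<m\to 2^k+d$), merely with the output range $\{0,\dots,n-1\}$ and with the roles of the ``heads-only'' and ``mixed'' buckets swapped relative to your labeling. Your $(s_i,t_i)$ bookkeeping is a nice way to motivate how one discovers the rule, but the resulting algorithm and its verification coincide with the paper's.
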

\begin{proof}
We show an algorithm that, given the outcomes of the coin flips, outputs an integer in $[0,n-1]$ with uniform probability. Let $m=n-2^k$, and let $d<2^k$ (respectively, $d'<2^k$) be the non-negative integer whose $i$-th binary digit is $1$ if and only if the fair coin comes up heads on the $i$-th flip (respectively, the $(k+i)$-th flip), with $1\leqslant i\leqslant k$. If the $(2^k/n)$-coin comes up tails, we output $d$. If the $(2^k/n)$-coin comes up heads and $d\geqslant m$, we output $d'$. If the $(2^k/n)$-coin comes up heads and $d<m$, we output $2^k+d$. Therefore, we output each integer in $[0,2^k-1]$ with probability $$\frac mn\cdot\frac 1{2^k}+\frac{2^k}n\cdot\frac{2^k-m}{2^k}\cdot\frac 1{2^k}=\frac 1n,$$ and we output each integer in $[2^k,n-1]$ with probability $$\frac{2^k}n\cdot\frac{m}{2^k}\cdot\frac 1m=\frac 1n.$$
\end{proof}

\begin{theorem}
A roll of a fair $n$-sided die can be simulated by one flip of a $(1/n)$-coin and $3\lfloor\log_2 n \rfloor+1$ flips of a fair coin.
\end{theorem}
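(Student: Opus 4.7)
The plan is to obtain the theorem as a direct composition of Lemma~\ref{l:1} and Lemma~\ref{l:2}, with no new probabilistic content. I would first set $k=\lfloor\log_2 n\rfloor$ and invoke Lemma~\ref{l:2}, which reduces the simulation of a fair $n$-sided die to one flip of a $(2^k/n)$-coin together with $2k$ flips of a fair coin. I would then eliminate the $(2^k/n)$-coin by invoking Lemma~\ref{l:1}, which replaces that single flip by one flip of a $(1/n)$-coin and $k+1$ additional flips of a fair coin.

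Concretely, I would describe the combined algorithm as follows: flip the $(1/n)$-coin once and flip a fair coin $3k+1$ times; use the first $k+1$ fair flips, together with the $(1/n)$-coin flip, as the input to the algorithm of Lemma~\ref{l:1} to produce a bit that is ``heads'' with probability $2^k/n$; then feed that bit, together with the remaining $2k$ fair flips, into the algorithm of Lemma~\ref{l:2} to output an integer in $[0,n-1]$.

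The arithmetic on the number of fair flips is simply $(k+1)+2k=3k+1=3\lfloor\log_2 n\rfloor+1$, and the single $(1/n)$-coin flip comes entirely from the invocation of Lemma~\ref{l:1}. Correctness of the output distribution follows because the fair coins used in the two subroutines are independent of each other and of the $(1/n)$-coin, so the simulated $(2^k/n)$-coin produced by Lemma~\ref{l:1} has exactly the distribution required by the hypothesis of Lemma~\ref{l:2}, and the latter then outputs a uniform element of $[0,n-1]$.

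The main (and essentially only) obstacle is to make the composition rigorous: one should note that the two blocks of fair flips are disjoint and independent, so that conditioning on the simulated $(2^k/n)$-coin outcome does not alter the distribution of the $2k$ fair flips used inside Lemma~\ref{l:2}. Once this independence is recorded, the theorem follows immediately, with no further calculation required.
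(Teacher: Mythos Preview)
Your proposal is correct and follows essentially the same approach as the paper: set $k=\lfloor\log_2 n\rfloor$, use Lemma~\ref{l:1} to simulate the $(2^k/n)$-coin from one $(1/n)$-coin flip and $k+1$ fair flips, then feed this into Lemma~\ref{l:2} with $2k$ further fair flips, for a total of $3k+1$ fair flips. Your added remark about independence of the two blocks of fair flips is a welcome clarification but does not change the argument.
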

\begin{proof}
Let $k=\lfloor\log_2 n \rfloor$. By Lemma~\ref{l:1} we can simulate a flip of a $(2^k/n)$-coin by one flip of the $(1/n)$-coin and $k+1$ flips of the fair coin. Then, by Lemma~\ref{l:2}, we can simulate a roll of a fair $n$-sided die by further flipping the fair coin $2k$ times.
\end{proof}

{\small \paragraph{\small Acknowledgments.} The author wishes to thank Daniele Bonotto, Alessandro Cobbe, Marcello Mamino, and Federico Poloni for insightful discussions.}

\small
\bibliographystyle{abbrv}

\end{document}